\title[de Rham Cohomology of Period Domains]{de Rham Cohomology of Period Domains}
\author{Mohammad Reza Rahmati}
\thanks{}
\address{ Abdus Salam School of Mathematical Sciences, GCU, Lahore, Pakistan
\hfill\break 
\hfill\break \\
\hfill\break }
\email{mrahmati@cimat.mx, rahmati@sms.edu.pk}
\newcommand{\comments}[1]{}
\def \H{{\mathcal H}}
\newtheorem{theorem}{Theorem}[section]
\newtheorem{proposition}[theorem]{Proposition}
\newtheorem{definition}[theorem]{Definition}
\keywords{Toric stacks, Mumford-Tate domains, Cohomology of stacks, Characteristic Cohomology, Hodge Structure}
\subjclass{14D23, 14D23, 14M25}
\begin{document}

\begin{abstract}
This is a review article discussing the de Rham cohomology of period domains of Hodge structures. We explain it as the de Rham cohomology of differentiable stacks as of a moduli space. We also discuss the cohomology of the partial toroidal compactification of these domains using known formulas on cohomology or Chow rings of toric structures. The text is expository and we have tried to connect some existing ideas that probably their relations not processed in the literature of Hodge theory. We state the significance of ideas as they naturally could be related, probably with not serious mathematical proof. The proofs stated in the text maybe expressed in a more serious context.  
\end{abstract}

\maketitle


\section*{Introduction}

Stacks were invented by Grothendieck to provide a general framework to study local-global phenomena in mathematics. Later Deligne and Mumford introduced what is now called Deligne-Mumford stack. M. Artin generalized Deligne-Mumford work in which it became a tool in algebraic geometry, specially in the study of quotient spaces. Every scheme is a Deligne-Mumford (DM-)stack and every DM-stack is an Artin stack, all called algebraic stacks. Algebraic stacks are a new type of spaces in algebraic geometry, having more flexibility in constructions which are impossible in schemes. The notions of analytic, differentiable, and topological stacks was introduced analogously in the corresponding categories. In this text our stacks are differentiable stacks and we also naturally regard them as topological stacks. 

A basic example is when a topological group $G$ acts on a point $*$, trivially. The quotient stack $[*/G]$ of this action is called classifying stack of $G$, and is denoted $BG$. Note that $(BG)_{mod}$ is a point. However $BG$ is far from being a trivial object. More precisely, the map $* \to BG$ makes $*$ into a principal $G$-bundle over $BG$, and this  $G$-bundle is universal. That is for every topological space $T$, the equivalence classes of morphisms $T \to BG$ are in bijection with the isomorphism classes of principal $G$-bundles over $T$. In this situation if $G$ is discrete, the quotient map $* \to BG$ becomes the universal cover of $BG$. 

A groupoid is a category such that any morphism between two objects is invertible. The action of the groupoid $\mathcal{G}$ on $X$ is a morphism $\mathcal{G} \times X \to X$ with two arrows $\eta_x:e \times x \Rightarrow x, \ (g_1.g_2) \times x \Rightarrow g_1.(g_2.x)$. If a groupoid acts on a set $X$ one may form the action groupoid representing this groupoid action, by taking the objects to be the elements of $X$ and the morphisms to be elements $g \in G$ such that $g.x =y$ and compositions to come from the binary operation in $G$. More explicitly the action groupoid is the set $G \times X$ (often denoted $G \ltimes X$) such that the source and target maps are $s(g,x)=x, \ t(g,x)=gx$. The action $\rho$ is equivalently thought of as a functor $\rho:BG \to Sets$, from the group $G$ regarded as a one-object groupoid, denoted by $BG$. This functors sends the single object to the set $X$. Let $Set_*$ be the category of pointed sets and $Set_* \to Sets$ be the forgetful functor. We can think of this as a universal set-bundle. Then the action groupoid is the pullbak 

\begin{equation}
\begin{CD}
[X/G] @>>> Sets_* \\
@VVV    @VVV\\
BG @>>> Sets
\end{CD}
\end{equation}
 
We are specially interested to torus actions in its groupoid sense. A Deligne-Mumford (DM) torus is $T \times BG$, where $T$ is a torus and $G$ is a finite abelian group. A smooth toric Deligne-Mumford stack is a smooth separated DM-stack $X$ together with an open immerssion of a Deligne-Mumford torus $\imath:\mathcal{T} \hookrightarrow X$ with dense image such that the action of $\mathcal{T}$ on itself extends to an action $\mathcal{T} \times X \to X$. In this case a morphism is a morphism of stacks which extends a morphism of Deligne-Mumford tori. By the same way we define toric varieties by combinatorial data of their fan, a toric stack may also be defined by a stacky fan. We shall explain the toric stacks as locally quotient stacks via stacky fans, see \cite{FMN, GS, J}. The stacks in this text are differentiable stacks however we begin the discussion in the category of topological stacks for simplicity.

\section*{Quotient and Toric Stacks}

Every toplogical stack $\mathfrak{X}$ has an underlying topological space called coarse moduli space denoted $X_{mod}$. There is a natural functorial map $\pi_{mod}:\mathfrak{X} \to X_{mod}$ called the moduli map. Roughly $X_{mod}$ is the best approximation of $\mathfrak{X}$ by a topological space. Assume that $\mathfrak{X}=[X/G]$ is a quotient stack. Then there is a natural quotient map $q:X \to [X/G]$, and this maps makes $X$ a principal $G$-bundle over $[X/G]$. The usual quotient map we know from topology is the composition $\pi_{mod} \circ q$. 
We can think of $\mathfrak{X}$ as a topological space $X_{mod}$ which at every point $x$ is decorated with a topological group $I_x$. The group $I_x$ is called the stabilizer or inertia group at $x$. These inertia groups are interwined in an intericate way along $X_{mod}$. When $\mathfrak{X}$ is Deligne-Mumford, all $I_x$ are discrete. 
At every point $x \in \mathfrak{X}$ we have a pointed map 
$(BI_x,x) \to (\mathfrak{X},x)$, \cite{N}. An example is the shpere $S^2$ with an action of $\mathbb{Z}_n$ as rotations
fixing the north and south poles. the stack $[S^2/\mathbb{Z}_n]$ has an underlying space which is homeomorphic to a sphere, however $[S^2/\mathbb{Z}_n]$ remembers the stabilizers at the two fixed points, namely the north and the south poles. Thus $[S^2/\mathbb{Z}_n]$ is like $B\mathbb{Z}_n$ at the fixed points and in the remaining points is like the sphere, \cite{N}. 

A stacky fan is a pair $(\Sigma, \beta)$, where $\Sigma$ is a fan on a lattice $L$ and $\beta:L \to N$ is a homomorphism to a lattice $N$, so that $coker(\beta)$ is finite. A stacky fan gives rise to a toric stack as follows. Let $X_{\Sigma}$ be the toric variety associated to $\Sigma$. The map $\beta^*:N^* \to L^*$ induces a homomorphism of Tori, $T_{\beta}:T_L \to T_N$, by naturally identifying $\beta$ with the induced map on lattices of 1-parameter subgroups. Since $coker(\beta)$ is finite, $\beta^*$ is injective, so $T_{\beta}$ is surjective. Let $G_{\beta}=\ker T_{\beta}$. Note that $T_L$ is the torus on $X_{\Sigma}$, and $G_{\beta} \subset T_L$ is a subgroup. The action of $G_{\beta}$ on $X_{\Sigma}$ is induced by the homomorphism $G_{\beta} \to T_L$, \cite{GS}. 
 
\begin{definition} \cite{GS}
If $(\Sigma,\beta)$ is a stacky fan, we define the toric stack $\mathcal{X}_{\Sigma,\beta}$ to be $[\mathcal{X}/G_{\beta}]$, with the torus $T_N=T_L/G_{\beta}$. 
\end{definition}

\noindent
Every toric stack arises from a stacky fan, since every toric stack is of the form $[X/G]$, where $X$ is a toric variety and $G \subset T_0$ is a subgroup of its torus. 
Associated to $X$ is a fan $\Sigma$ on the lattice $L=Hom(\mathbb{G}_m,T_0)$. The surjection of tori $T \to T/G$ induces a homomorphism of lattices of 1-parameter subgroups, $\beta:L \to N:=Hom(\mathbb{G}_m,T/G)$. The dual homomorphism $\beta^*:N^* \to L^*$ is the induced homomorphism of characters. Since $T \to T/G$ is surjective, $\beta^*$ is injective, and the cokernel of $\beta$ is finite. Thus $(\Sigma,\beta)$ is a stacky fan and $[X/G]=\mathcal{X}_{\Sigma,\beta}$, \cite{GS}.

One may expect that the toric DM-stack can be represented by a larger $Z$ with larger group $G$. For instance, the classifying stack $B\mu_3=[*/\mu_3]$ is a toric DM stack by the stacky fan $(N=\mu_3,0,0)$. It is isomorphic to $[\mathbb{C}^{\times}/\mathbb{C}^{\times}]$ where $\mathbb{C}^{\times}$ acts by $(\lambda,x) \to \lambda^3x$. It is simply possible that a toric stack correspond to different stacky fans. Motivated by the study of gerbes one introduces the notion of extended stacky fan. An extended stacky fan is a triple $(N, \Sigma, \beta^e)$ where $N, \Sigma$ are the same as the stacky fan $\Sigma$, and $\beta^e:\mathbb{Z}^m \to N$ is determined by $b_1,...,b_n$ and additional elements $b_{n+1},...,b_m$. Then the toric DM-stack is defined by $[Z^e/G^e]$, where $Z^e=Z \times (\mathbb{C}^{\times})^{m-n}$, and $G^e$ acts on $Z^e$ through the homomorphism $\alpha^e:G^e \to (\mathbb{C}^{\times})^{m-n}$ determined by the extended stacky fan, \cite{J}. 

A coherent sheaf on a DM stack $[Z/G]$ is a $G$-equivariant sheaf on $Z$. A line bundle on $[Z/G]$ is given by a character $\chi$ of $G$. Let $b$ be a positive integer. We denote by $\sqrt[a]{L/X}$ the fiber product

\begin{equation}
\begin{CD}
\sqrt[a]{L/X} @>>> B\mathbb{C}^*\\
@VVV                    @VV \wedge bV\\
X @>>L>                 B\mathbb{C}^*
\end{CD}
\end{equation}

\noindent
where $\wedge b$ is the map that sends an invertible sheaf to its $b$-th power. More explicitly an object of $\sqrt[a_1]{L/X}$ over $f:S \to X$ is a couple $(M, \phi)$ where $M$ is an invertable sheaf on $S$ and $\phi:M^{\otimes b} \stackrel{\cong}{\rightarrow} f^*L$ is an isomorphism, \cite{FMN}. 

The structure morphism $X \to \bar{X}$ into its coarse moduli space factors canonically via the toric morphisms $X \to X^{\text{rig}} \to X^{\text{can}} \to \bar{X}$ where the first map is an abelian gerbe over $ X^{\text{rig}} $, and is a fiber product of roots of toric divisors as in (2), and the last  is the minimal orbifold having $\bar{X}$ as a coarse moduli space. there exists a unique $(a_1,...,a_n) \in \mathbb{N}_{>}^n$ such that 

\begin{equation} 
X^{\text{rig}} \cong \sqrt[a_1]{D_1/X} \times_X ... \times_X \sqrt[a_n]{D_n/X} 
\end{equation}

\noindent
where $D_i$ is the divisor corresponding to the ray $\rho_i$. For instance, the stack $\mathbb{P}(w) \cong [(\mathbb{C}^{n+1} \setminus 0) / \mathbb{C}^*]$ is a toric DM-stack with deligne-Mumford torus $[\mathbb{C}^{n+1} / \mathbb{C}^*]  \cong \mathbb{C}^{n} \times B\mu_d$. It is canonical if $gcd(w_0,..., \hat{w_i},...,w_n)=1$. It is an orbifold if $gcd(w_0,...,w_n)=1$, \cite{FMN}.

\section{Cohomology of Stacks}

The reference for this section is \cite{KB}. In this section we consider differentiable stacks. Although the stacks discussed in the previous sections were defined in the topological category, however we will just consider them as  differentiable stacks. Differentiable stacks are stacks over the category of differentiable manifolds. They are the stacks associated to Lie groupoids. A groupoid $X_1 \rightrightarrows X_0$ is a Lie groupoid if $X_0$ and $X_1$ are differentiable manifolds, structure maps are differentiable, the source and target maps are submersion. There is associated a simplicial nerve to a Lie groupoid namely

\begin{equation} 
X^{\bullet}:=\{\ X_p:= \overbrace{X_1 \times_{X_0} X_1 \times_{X_0} ... \times_{X_0} X_1}^\text{p \ times}\ \}_p \qquad \rightleftharpoons \qquad (X_1 \rightrightarrows X_0) 
\end{equation}

\noindent
Then we get an associated co-simplicial object 

\begin{equation}
\Omega^q(X_0) \to \Omega^q(X_1) \to \Omega^q(X_2) \to ... \ \ \ , \qquad \partial=\sum_{i=0}^p (-1)^i \partial_i^* 
\end{equation}

\noindent
the cohomology groups $H^k(X,\Omega^q)$ are called the Cech cohomology groups of the groupoid $X=[X_1 \rightrightarrows X_0]$, \cite{KB}.

\begin{definition}
Let $\mathcal{X}$ be a differentiable stack. Then 

\begin{equation}
H^k(\mathfrak{X},\Omega^q)=H^k(X_1 \rightrightarrows X_0, \Omega^q)
\end{equation}

\noindent
for any Lie groupoid $X \rightrightarrows X_0$ giving an atlas for $\mathfrak{X}$. In particular this defines 

\begin{equation}
\Gamma(\mathfrak{X}, \Omega^q)=H^0(\mathfrak{X}, \Omega^q) 
\end{equation}

\end{definition}

There are a number of well definedness conditions to be checked at this stage, such as the independence of the definition with respect to the choice of representative groupoid. We refer the reader to \cite{KB} for the details. The cohomology groups $H^k(X,\Omega^q)$ are called the Cech-de Rham cohomology groups of the groupoid $X=[X_1 \rightrightarrows X_0]$. The double complex $A^{pq}:=\Omega^q(X_p)$ is called the de Rham complex of the stack $\mathfrak{X}$, and its cohomologies are called de Rham cohomologies of $\mathfrak{X}$. The cohomology of the DM-stack $X$ is defined to be $H_{DR}^n(X)=H_{DR}^n(X_1 \rightrightarrows X_0)$, for any groupoid atlas $X_1 \rightrightarrows X_0$ of the stack $X$, \cite{KB}. 

If $G$ is a Lie group then $H^k(BG, \Omega^0)$, is the group cohomology of $G$ calculated with differentiable cochains. When the stack $X$ is a quotient stack there is a clear explanation of its cohomology as an equivariant cohomology. There is a well-known generalization of the de Rham complex to the equivariant case, namely the Cartan complex $\Omega_G^{\bullet}(X)$, defined by

\begin{equation}
\Omega_G^{\bullet}(X):= (\displaystyle{\bigoplus_{2k+i=n}(S^k\mathfrak{g}^{\vee} \otimes \Omega^i(X))^G}, \ d_{DR}-\imath)
\end{equation}

\noindent
where $S^{\bullet}\mathfrak{g}^{\vee}$ is the symmetric algebra on the dual of the Lie algebra of $G$. The group $G$ acts on $\mathfrak{g}$ by adjoint representation on $\mathfrak{g}^{\vee}$ and by pull back of differential forms on $\Omega^{\bullet}(X)$. The differential is $d_{DR}-\imath$ where $\imath$ is the tensor induced by the vector bundle homomorphism $\mathfrak{g}_X \to T_X$ coming from differentiating the action. If $G$ is compact, the augmentation is a quasi-isomorphism, i.e

\begin{equation}
H_G^i(X) \stackrel{\cong}{\longrightarrow}H^i(Tot \ \Omega_G^{\bullet}(X_{\bullet}))
\end{equation}

\noindent
for all $i$ and the groupoid $X_{\bullet}$, \cite{KB}.

\begin{proposition} \cite{KB}
If the Lie group $G$ is compact, there is a natural isomorphism 

\begin{equation}
H_G^i(X) \stackrel{\cong}{\longrightarrow}H_{DR}^i(G \times X \rightrightarrows X)= H_{DR}^i([X/G]) 
\end{equation}

\end{proposition}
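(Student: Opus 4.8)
The plan is to establish the isomorphism $H_G^i(X) \cong H_{DR}^i([X/G])$ by identifying both sides with the cohomology of a common double complex, namely the Cartan model computed on the simplicial nerve of the action groupoid. First I would recall that the stack $[X/G]$ is presented by the Lie groupoid $G \times X \rightrightarrows X$, whose nerve in simplicial degree $p$ is $X_p = G^p \times X$ with the usual face and degeneracy maps. By the definition of de Rham cohomology of a stack given in part (1) of this section, $H_{DR}^i([X/G])$ is the cohomology of the total complex of the double complex $A^{p,q} = \Omega^q(G^p \times X)$ with the simplicial coboundary $\partial = \sum (-1)^j \partial_j^*$ in one direction and the de Rham differential in the other.

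Next I would invoke the quasi-isomorphism recorded in equation (10): since $G$ is compact, for each simplicial level the augmentation from the $G$-equivariant Cartan complex $\Omega_G^\bullet(X_p)$ to the ordinary de Rham complex of $X_p$ (or rather, the comparison of the Cartan model with the Borel/simplicial model) is a quasi-isomorphism. The key structural point is that the Cartan complex $\Omega_G^\bullet(X)$ already computes $H_G^\bullet(X)$ by its very definition together with (10), so what remains is to compare the Cartan model for the $G$-action on $X$ with the total complex built from the nerve $G^\bullet \times X$. Concretely I would introduce the triple complex obtained by applying the Cartan functor $\Omega_G^\bullet(-)$ simplicially to $X_\bullet = G^\bullet \times X$, and then run a spectral sequence (or a double filtration argument) in two ways: collapsing the simplicial direction first uses compactness of $G$ and the fact that $G$ acts freely on $EG$-type resolutions, yielding $H_G^\bullet(X)$; collapsing the Cartan direction first, again using (10) level-wise, yields the de Rham cohomology of the groupoid, i.e. $H_{DR}^\bullet([X/G])$. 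Averaging over the compact group $G$ with its normalized Haar measure is what makes the relevant augmentation maps quasi-isomorphisms at every stage and kills the higher cohomology of $G$ with coefficients in the de Rham complex, so both spectral sequences degenerate appropriately.

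The naturality of the isomorphism I would obtain for free from the functoriality of the Cartan construction and of the nerve: a $G$-equivariant map $X \to X'$ induces a map of action groupoids, hence of nerves, hence of all the double and triple complexes in sight, compatibly with the augmentations. Finally I would note that the identification $H_{DR}^i(G \times X \rightrightarrows X) = H_{DR}^i([X/G])$ is precisely the definition of stack de Rham cohomology stated above, since $G \times X \rightrightarrows X$ is a groupoid atlas for the quotient stack and the construction is independent of the choice of atlas.

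The main obstacle I expect is the homological bookkeeping needed to justify that one of the two spectral sequences of the triple complex degenerates so that it genuinely computes $H_G^\bullet(X)$ rather than some a priori different group; this is where compactness of $G$ (via Haar averaging, which makes the functor of $G$-invariants exact and trivializes $H^{>0}(G;-)$ on the relevant modules) does the essential work, and where one must be careful that the Cartan model — not merely the Borel model — is the correct object to feed into the simplicial machinery. A secondary technical point is checking that all the differential forms involved are appropriately smooth along the groupoid direction so that the de Rham complex of the nerve is well behaved; this is routine given that source and target maps are submersions, but it should be stated.
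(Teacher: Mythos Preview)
The paper does not actually supply a proof of this proposition: it is quoted verbatim from \cite{KB}, and the only argument the paper records is the sentence preceding it together with equation~(10), namely that for $G$ compact the augmentation $H_G^i(X)\to H^i(\mathrm{Tot}\,\Omega_G^\bullet(X_\bullet))$ is a quasi-isomorphism. Your proposal is essentially a fleshed-out version of Behrend's original argument, and it lines up with what the paper invokes: you build the simplicial Cartan complex on the nerve $X_\bullet=G^\bullet\times X$, and compare the two spectral sequences of the resulting double (or triple) complex, using compactness of $G$ (Haar averaging, exactness of $(-)^G$) to collapse one of them. That is precisely the mechanism encoded in (10), so your approach and the paper's single-line appeal to \cite{KB} are the same argument at different levels of detail.

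One small point of presentation: in your step invoking (10) you speak of the augmentation ``from the $G$-equivariant Cartan complex $\Omega_G^\bullet(X_p)$ to the ordinary de Rham complex of $X_p$'', but the relevant augmentation in (10) goes the other way around in the simplicial direction --- it is the inclusion of $\Omega_G^\bullet(X)$ as the degree-zero column of $\Omega_G^\bullet(X_\bullet)$, and compactness is used to show that the simplicial direction is acyclic (via the contracting homotopy built from integration over $G$). This is only a matter of saying clearly which augmentation you mean; the spectral-sequence bookkeeping you outline handles it correctly once stated.
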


\noindent
As a corollary $H_{DR}^*(BG)=(S^{2*}\mathfrak{g}^{\vee})^G$ for a compact lie group $G$. If $G$ is not compact, then $H_{DR}([X/G])$ is still equal to equivariant cohomology. This fact holds for equivariant cohomology in general. 

Similarly, every topological groupoid defines a simplicial nerve $X_{\bullet}$ which gives rise to the double complex $C_q(X_p)$ of simplices. The total homology of this double complex are called singular homology of $X_1 \rightrightarrows X_0$. A simple example is to consider the transformation groupoid $G \times X \rightrightarrows X$ for a discrete group $G$. In this case we have the degenerate spectral sequence 

\begin{equation}
E_{p,q}^2=H_q(G,H_p(X)) \Rightarrow H_{p+q}(G \times X \to X)
\end{equation}

\noindent
When $X$ is a point $H_p(G \times X \rightrightarrows X) =H_p(G,\mathbb{Z})$. In general there exists interpretation of the singular homology $H_*([X/G])$ in terms of the equivariant homology of $X$ when the Lie group $G$ acts continuously on $X$, exactly similar to de Rham cohomology case. There is also the dual notion of singular cohomologies of the stack $X$ by replacing the double complex $C_q(X_p)$ with its dual $Hom(C_q(X_p),\mathbb{Z})$. Directly we obtain a pairing

\begin{equation}
H_k(X,\mathbb{Z}) \times H^k(X,\mathbb{Z}) \to \mathbb{Z}
\end{equation}

For example, the stack of Elliptic curves $M_{1,1}$ may be represented by the action of $Sl_2(\mathbb{Z})$, by the linear fractional transformations on the upper half plane in $\mathbb{C}$. Thus the homology of the stack of Elliptic curves is equal to the homology of $Sl_2(\mathbb{Z})$, \cite{KB}.  

\begin{theorem} \cite{KB}
Let $\mathfrak{X}$ be a topological Deligne-Mumford stack with coarse moduli space $\bar{X}$. Then the canonical morphism $X \to \bar{X}$ induces isomorphisms on $\mathbb{Q}$-valued cohomologies,

\[ H^k(\mathfrak{X},\mathbb{Q}) \stackrel{\cong}{\longrightarrow} H^k(X,\mathbb{Q}) \]

\end{theorem}

The theorem follows by taking an open cover of $\mathfrak{X}$ by quotient stacks and using the Cech spectral sequence. We know $H^*(BGl_n,\mathbb{Z})=\mathbb{Z}[t_1,...,t_n]$. $t_i \in \H^{2i}(BGl_n,\mathbb{Z})$ is called the universal chern class. Given a rank $n$-vector bundle $E$ over a stack $X$, we get an associated morphism of stacks $f:X \to BGl_n$, such that the following diagram is commutative;

\begin{equation}
\begin{CD}
B @>>> * \\
@VVV @VVV\\
X @>f>> BGl_n  
\end{CD} \qquad \leftrightarrows \qquad \begin{CD}
E @>>> [\mathbb{C}^n/Gl_n] \\
@VVV @VVV\\
X @>f>> BGl_n  
\end{CD}
\end{equation}

\noindent
$B$ is the principal $Gl_n$-bundle of frames of $E$. The $i$-th chern class of the vector bundle $E$ is defined by $c_i(E) :=f^* \ t_i$, \cite{KB}. 

\begin{theorem} \cite{KB} 
If all the odd degree cohomologies of the stack $X$ vanishes then $H^*(E \setminus X)=H^*(X)/c_n(E)$. 
\end{theorem}

\section{Period and Mumford-Tate domains}

We briefly mention some basic definitions from \cite{KP} on period domains. Let $V$ be finite dimensional $\mathbb{Q}$-vector space, and $Q$ a non-degenerate bilinear map $Q:V \otimes V \to \mathbb{Q}$ which is $(-1)^n$-symmetric for some fixed $n$. A Hodge structure is given by a representation  

\begin{equation}
\phi:\mathbb{U}(\mathbb{R}) \to Aut(V, Q)_{\mathbb{R}}, \qquad \mathbb{U}(\mathbb{R}) =\left( 
\begin{array}{cc}
a  &  -b\\
b &    a
\end{array} \right), \ a^2+b^2 =1
\end{equation}

\noindent
It decomposes over $\mathbb{C}$ into eigenspaces $V^{p,q}$ such that $\phi(t).u= t^p\bar{t}^q.u$ for $u \in V^{p,q}$, and $\overline{V^{p,q}}=V^{q,p}$. It is well-known that $Ad(\phi):\mathbb{U}(\mathbb{R}) \to Aut(\mathfrak{g}_{\mathbb{R}},B)$ defines a weight $0$ Hodge structure, where $B$ is the Killing form. Then $\mathfrak{g}_{\mathbb{C}}$ has a decomposition 

\begin{equation}
\mathfrak{g}=\mathfrak{g}_{\phi}^- \oplus \overline{\mathfrak{g}_{\phi}^-} \oplus \mathfrak{h}_{\phi}, \qquad \mathfrak{g}^-:=\oplus_{i >0}\mathfrak{g}_{\phi}^{-i,i} 
\end{equation}

\noindent
and $T_{\phi,\mathbb{R}}D \otimes \mathbb{C}= \mathfrak{g}_{\phi}^- \oplus \overline{\mathfrak{g}_{\phi}^-}, \ T_{\phi}D=\mathfrak{g}_{\phi}^-$. The Mumford-Tate group $M_{\Phi}$ of the Hodge structure $\phi$ is the smallest $\mathbb{Q}$-algebraic subgroup containing the image of $\phi$, and is a reductive Lie group, that satisfies the almost product decomposition $M_{\Phi}=M_1 \times ... \times M_l \times T$. The associated Mumford-Tate domains $D_i$ (the $M_i(\mathbb{R})$-orbit of $\phi(s_0)$) have a splitting of the 

\begin{equation}
D_{M_{\Phi}}=D_1 \times ... \times D_l
\end{equation}

\noindent
Let $T_1,...,T_n$ be generators of the monodromy group $\Gamma$. Then a partial compactification $\Gamma \setminus D_{\sigma}$ can be defined  by the cone 

\begin{equation}
\sigma=\displaystyle{\sum_{j=1}^n \mathbb{R}_{\geq 0} N_j, \qquad N_j=\log T_j} 
\end{equation}

\noindent
A boundary point is a nilpotent orbit associated to a face of $\sigma$. Set 

\begin{equation}
\Gamma(\sigma)^{\text{gp}}=\exp(\sigma_{\mathbb{R}}) \cap G_{\mathbb{Z}}, \ \  \Gamma(\sigma)=\exp(\sigma) \cap G_{\mathbb{Z}}
\end{equation}

\noindent
The monoid $\Gamma (\sigma)$ defines the toric variety 

\begin{equation} 
D_{\sigma}:=Spec([\mathbb{C}[\Gamma(\sigma)^{\vee}])_{an} \cong Hom(\Gamma(\sigma)^{\vee},\mathbb{C}) 
\end{equation}

\noindent
with the torus

\begin{equation}
T_{\sigma}:=Spec(\mathbb{C}[\Gamma(\sigma)^{\vee \text{gp}}])_{an} \cong Hom(\Gamma(\sigma)^{\vee \text{gp}} , \mathbb{G}_m) \cong \mathbb{G}_m \otimes \Gamma(\sigma)^{\text{gp}} 
\end{equation}

\noindent
We assume that $\Gamma$ is strongly compatible with $\Sigma$,  

\begin{itemize}
\item $Ad(\gamma).\sigma \in \Sigma, ( \forall \gamma \in \Gamma, \sigma \in \Sigma )$
\item $\sigma_{\mathbb{R}}=\mathbb{R}_{\geq 0}\langle \log \Gamma(\sigma) \rangle$.
\end{itemize}

Let $\phi: \mathbb{U} \to M$ be a Hodge structure, with the associated Mumford-Tate domain $D_M=M(\mathbb{R}). \phi$. Set $\mathfrak{m}=Lie(M)$. The boundary domponent associated to $ \mathbb{Q}_{\geq 0} \langle N_1,...,N_r \rangle \subset \mathfrak{m}$ is 

\begin{equation}
B_{\sigma}:=\tilde{B}_{\sigma}/e^{{\langle \sigma \rangle}_{\mathbb{C}}}
\end{equation}

\noindent
where

\begin{equation}
 \tilde{B}_{\sigma}:=\{ F^{\bullet} \in \check{D} \ | \ Ad (e^{ \sigma}).F^{\bullet} \ \text{is a nilpotent orbit} \}
\end{equation}

\noindent  
Kato-Usui define a generalization of the Hodge domains as follow, 

\begin{equation} 
D_{M,\sigma} :=\displaystyle{\amalg_{\sigma \in \Sigma} \ \{\ Z \subset \check{D_M}\ |\ \ Z \ \ \text{is a} \  \sigma-\text{nilpotent orbit} \}=\amalg_{\sigma \in \Sigma}\ B(\sigma)} 
\end{equation}

\noindent
This always contain $B(\{0\})=D_M$. It gives a toric variety structure for $D_M$, denoted $D_{M,\sigma}$. In particular $D_{M,\sigma}=D_{M,\text{faces of}\ \sigma}$. When $\Gamma \subset M(\mathbb{Z})$ is a neat subgroup of finite index , then there exist successive covers 

\begin{equation}
\overline{B(N)} \twoheadrightarrow ... \overline{B(N)}_{(k)} \twoheadrightarrow ... \twoheadrightarrow \overline{B(N)}_{(1)} \twoheadrightarrow \overline{D(N)} 
\end{equation}

\noindent
with $k>1$, and intermediate Jacobian fibers at each stage, and also $\overline{D(N)}$ being discrete, \cite{KP}. 

We now come back to the discussion of the toric stacks. We claim that $\overline{D_M}$ with the above structure is a toric stack. The injective map $\beta : \gamma \mapsto e^{\gamma} \mapsto \log \circ e^{\gamma}$ restricted to $\Sigma=\mathbb{Z}\langle N_1,...,N_r \rangle \to \mathbb{Z}^r$ defines a homomorphism of stacky fan type, i.e it has finite cokernel. We will consider $\beta$ as an isomorphism onto its image $\mathfrak{m}_{\mathbb{Z}}$ a maximal lattice in $\mathfrak{m}$. As we explained this establishes a toric stack structure on $\overline{D_M}$. The torus action is explained via the partial toric structure on the stratification of $\overline{D_M}$ as explained through the whole process in this section. One should note that this does not mean $\overline{D_M}$ is a toric variety, however on each open strata we have a torus action. One expects that the torus action can never extend to the whole $\overline{D_M}$ (nor either $D_M$ for basic reasons).

\section{Characteristic cohomology}

In a general situation when $X$ is a complex manifold and $W \subset X$ a holomorphic sub-bundle. Then $I=W^{\perp} \subset T^*(X)$ is also holomorphic and let $\bar{I}$ be its conjugate. Let $\mathcal{I}^{\bullet \bullet} \subset \mathcal{A}^{\bullet \bullet}$ be the differential ideal of sections of $I \oplus \bar{I}$ in the algebra of smooth differential forms on $X$. The characteristic cohomology of $X$ denoted $H^*_{\mathcal{I}}(X)$ is the cohomology of the double complex $ \subset \mathcal{A}^{\bullet \bullet}/\mathcal{I}^{\bullet \bullet}$, \cite{GGK}. 

Let $D$ be a period domain for a PHS $(V,Q,\phi)$ of weight $n$ and set $\Gamma_{\mathbb{Z}}=Aut(V_{\mathbb{Z}},\mathbb{Q})$. In the tangent bundle $TD$ there is a homogeneous sub-bundle $W$ whose fiber at $\phi$ is 

\begin{equation}
W_{\phi}=\mathfrak{g}_{\phi}^{-1,1}=\{\psi \in T_{\phi}D:\psi(F_{\phi}^p) \subset F_{\phi}^{p+1} \}
\end{equation}

\noindent
defined by, namely infinitesimal period relations (IPR). There is a natural inclusion $T_{F^{\bullet}}\check{D} \subset \oplus_p Hom(F^p,V_{\mathbb{C}}/F^p)$. We shall assume that, the canonical sub-bundle $W \subset T\check{D}$ given by the infinitesimal period relation is defined by 

\begin{equation}
W_{F^{\bullet}}=T_{F^{\bullet}} \check{D} \cap (\oplus_p Hom(F^p,F^{p-1}/F^p)) 
\end{equation}

\noindent
The bundle $W \to \check{D}$ is acted by $G_{\mathbb{C}}$, and the action of $G_{\mathbb{R}}$ on $W \to D$ leaves invariant the metric given by Cartan killing form at each point. With the identification $T_{\phi}D=\oplus_{i >0}\mathfrak{g}^{-i,i}$ we have $W_{\phi}=\mathfrak{g}_{\phi}^{-1,1}$. In our case, take $X=D_M \subset D$ to be a Mumford-Tate domain, and $W_M \subset TD_M$ is the infinitesimal period relation. Denote by $\Lambda_M^{\bullet \bullet}$ the complex of $G(\mathbb{R})$-invariant forms in $A^{\bullet \bullet}(D_M)/\mathcal{I}^{\bullet \bullet}$ with the operator $\delta:\Lambda_M^{\bullet} \to \Lambda^{\bullet +1}$.  $H^*(\Lambda_M^{\bullet}, \delta_M)$ is called the universal characteristic cohomology, \cite{GGK}. 

\begin{proposition} \cite{GGK}
The universal characteristic cohomology i.e the cohomology of $A^{\bullet \bullet}(D_M)/\mathcal{I}^{\bullet \bullet}, \delta:\Lambda_M^{\bullet} \to \Lambda^{\bullet +1}):=H^*(\Lambda_M^{\bullet}, \delta_M)$ is equal to the equivariant cohomology $H^{*}(I^{\bullet \bullet})$. Moreover, $H^{2p-1}(\Lambda_M^{\bullet}, \delta_M)=0$ and 

\begin{equation}
H^{2p}(\Lambda_M^{\bullet}, \delta_M)=(\Lambda^{p,p}){^{\mathfrak{m}^{0,0}}}.
\end{equation}
 
\end{proposition}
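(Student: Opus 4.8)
The plan is to reduce the statement to a weight computation for the isotropy representation at the base point $\phi$. Write $D_M = M_{\Phi}(\mathbb{R})/H_{\phi}$, where $H_{\phi}=Z_{M_{\Phi}(\mathbb{R})}(\phi(\mathbb{U}(\mathbb{R})))$ is the stabilizer of $\phi$; since $H_{\phi}$ preserves the positive definite Hodge metric it is compact, the circle $\phi(\mathbb{U}(\mathbb{R}))$ lies in its centre, and its complexified Lie algebra is exactly $\mathfrak{m}^{0,0}$. An $M_{\Phi}(\mathbb{R})$-invariant form on $D_M$ is determined by its value at $\phi$, so, using $T^{1,0}_{\phi}D_M=\mathfrak{m}^{-}$ and $W_{\phi}=\mathfrak{m}^{-1,1}$, the value at $\phi$ of an invariant $(p,q)$-form in $\mathcal{A}^{\bullet\bullet}(D_M)/\mathcal{I}^{\bullet\bullet}$ is an $H_{\phi}$-invariant element of a quotient of $\wedge^{p}(\mathfrak{m}^{-1,1})^{*}\otimes\wedge^{q}(\mathfrak{m}^{1,-1})^{*}$. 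Thus $\Lambda_M^{p,q}$ is the $H_{\phi}$-invariant part of the $(p,q)$-graded piece $\Lambda^{p,q}$ of $\mathcal{A}^{\bullet\bullet}(D_M)/\mathcal{I}^{\bullet\bullet}$ at $\phi$. The first assertion --- that $H^{*}(\Lambda_M^{\bullet},\delta_M)$ equals the full characteristic cohomology $H^{*}(\mathcal{A}^{\bullet\bullet}(D_M)/\mathcal{I}^{\bullet\bullet})$ --- I would deduce from the homogeneity of the pair $(D_M,\mathcal{I})$: $D_M$ deformation retracts onto a compact $K$-orbit $K/H_{\phi}$, with $K\supset H_{\phi}$ a maximal compact of $M_{\Phi}(\mathbb{R})$, and on that compact homogeneous space the inclusion of invariant forms into all forms is a quasi-isomorphism, Proposition~3.1 (equivariant $=$ de Rham cohomology in the compact case) being the relevant input; I would quote the precise form of this reduction from \cite{GGK}.

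The heart of the argument is a weight count. The central circle $\phi(\mathbb{U}(\mathbb{R}))\subset H_{\phi}$ acts by $\operatorname{Ad}$ on $\mathfrak{m}^{-k,k}$ as multiplication by $t^{-2k}$; in particular it acts trivially on $\mathfrak{m}^{0,0}$, with weight $+2$ on $(\mathfrak{m}^{-1,1})^{*}$ and with weight $-2$ on $(\mathfrak{m}^{1,-1})^{*}$. Hence it acts on $\Lambda^{p,q}$ with weight $2(p-q)$, and since every element of $\Lambda_M^{p,q}$ is in particular $\phi(\mathbb{U}(\mathbb{R}))$-invariant, we must have $p=q$. Therefore $\Lambda_M^{p,q}=0$ whenever $p\neq q$, so $\Lambda_M^{\bullet}$ is concentrated in even total degree, with $\Lambda_M^{2p}=(\Lambda^{p,p})^{\mathfrak{m}^{0,0}}$ (here $\mathfrak{m}^{0,0}$-invariants coincide with $H_{\phi}^{\circ}$-invariants, and with $H_{\phi}$-invariants when $H_{\phi}$ is connected).

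To conclude, $\delta_M$ is induced by the de Rham differential and raises total degree by $1$; since $\Lambda_M^{n}=0$ for every odd $n$, each map $\delta_M\colon\Lambda_M^{n}\to\Lambda_M^{n+1}$ has zero source or zero target, hence $\delta_M\equiv 0$. Consequently $H^{2p-1}(\Lambda_M^{\bullet},\delta_M)=0$ and $H^{2p}(\Lambda_M^{\bullet},\delta_M)=\Lambda_M^{2p}=(\Lambda^{p,p})^{\mathfrak{m}^{0,0}}$ (the exponent in the last displayed formula of the statement should read $2p$). The one genuinely delicate step is the first one: for the non-compact group $M_{\Phi}(\mathbb{R})$ there is no averaging, so identifying the invariant characteristic cohomology with the full one, and checking that the relations introduced by using the \emph{differential} ideal $\mathcal{I}^{\bullet\bullet}$ rather than the merely algebraic ideal generated by sections of $I\oplus\bar I$ are compatible with the weight decomposition above, requires the retraction argument; once that is in place, the weight-and-degree bookkeeping that forces $\delta_M\equiv 0$ is purely formal. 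One must also keep track of the component group of $H_{\phi}$, since it is precisely what separates $H_{\phi}$-invariants from $\mathfrak{m}^{0,0}$-invariants.
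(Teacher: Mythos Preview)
The paper does not supply a proof of this proposition: it is stated with a bare citation to \cite{GGK} and no argument is given in the text. Your proposal---evaluating $M_{\Phi}(\mathbb{R})$-invariant forms at the base point $\phi$, then using the weight of the central circle $\phi(\mathbb{U}(\mathbb{R}))$ on $(\mathfrak{m}^{-1,1})^{*}$ and $(\mathfrak{m}^{1,-1})^{*}$ to force $p=q$, whence $\Lambda_M^{\bullet}$ lives in even degree and $\delta_M\equiv 0$---is precisely the argument in \cite{GGK}, so there is nothing in the present paper to compare against and your write-up is correct (including your observation that the second ``$2p-1$'' in the statement is a typo for $2p$).
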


The universal characteristic cohomology is equal to the de Rham cohomology of the stack $D_M$. In case $D_M=D$ and $M=G$, the universal characteristic cohomology is generated by the chern forms of Hodge bundles. According to the proposition 3.2 and the remark after that, the universal characteristic cohomology can be understood as the stack cohomology of the Mumford-Tate domains. One can ask; hat are the conditions that the chern forms of Hodge bundles on $D_M$ generate the characteristic cohomology? In case this holds what are the relations?.  

Keeping the notation as section (0), let $P \to B$ be a principal $(\mathbb{C}^{\times})^m$-bundle. The group $G^e$ acts on the fiber product $P \times_{(\mathbb{C}^{\times})^m} Z^e$ via the map $\alpha^e$, where $\alpha^e$ is given by the stacky fan. Each $\theta \in M=N^*$, gives $\chi_{\theta}:(\mathbb{C}^{\times})^m \to (\mathbb{C}^{\times})$ and assume the line bundle $\psi_{\theta}=P \times_{\chi_{\theta}} \mathbb{C}$ be given by the ray $\theta$. Then according to the main result of \cite{J} one knows that

\begin{equation}
H_{orb}^*([(P \times_{\alpha^e} Z^e)/G^e]=\dfrac{H^*(B)[N] \otimes \mathbb{Q}[N]^{\Sigma^e}}{<c_1(\psi_{\theta})+\theta(b_i)y^{b_i}>_{\theta \in M}}
\end{equation}

\vspace{0.2cm}

\noindent
where $H_{orb}^*$ is the orbifold Cohomology ring, $H^*$ the usual cohomology ring, $<.>$ means the ideal generated by $.$, and $\mathbb{Q}[N]^{\Sigma^e}$ is $\oplus_{c \in N}y^c$ where $y$ shows formal variables and the multiplication $y^c.y^{c'}=y^{c+c'}$ when there is $\sigma \in \Sigma $ such that $\beta^e(c)$ and $\beta^e(c') \in \sigma$, and $0$ otherwise, \cite{J}. We apply the above to the homogeneous manifold $G/P$ of the principal bundle $P \hookrightarrow G \to G/P=D$ and the homogeneous vector bundle $\Lambda$ of characteristic cohomology. Then characteristic cohomology finds the form 

\begin{equation}
H^*(\Lambda_M^{\bullet}, \delta_M)=\dfrac{H^*(D_M)[\mathfrak{m}]^{\Sigma}}{<c_1(\psi_{\lambda})+\lambda(b_i)y^{b_i}>_{\lambda}}
\end{equation}

\noindent
where we are using notations in (28). 
The proof follows from the fact that $TD_M$ is an equivariant $\mathbb{C}^*$-bundle where $\mathbb{C}^*$ acting as the adjoint action of the maximal torus, or equivalently through a 1-PS with weights equal to Cartan integers. in fact one has the simple relation $\Lambda^{p,q}=\bigwedge^{p} \mathfrak{m}^{-1,1} \otimes \bigwedge^{q} \mathfrak{m}^{1,-1}$. This provides a toric stack bundle which the proposition (28) applies.

\end{document}